\documentclass[preprint,12pt,nonatbib]{amsart}

\usepackage[pagewise]{lineno}
\usepackage[english]{babel}
\usepackage[latin1]{inputenc}
\usepackage{amsmath,amssymb,amsthm}
\usepackage[backend=biber,style=numeric,sorting=nyt,maxnames=99, doi=false,url=false]{biblatex}
\AtEveryBibitem{\clearfield{issn}}
\AtEveryBibitem{\clearfield{isbn}}

\DeclareMathOperator{\Val}{Val}
\DeclareMathOperator{\Eval}{ev}

\newtheorem{theorem}{Theorem}[section]
\newtheorem{lemma}[theorem]{Lemma}

\title{Interval Rings}

\author[Heinzer]{William Heinzer} 
\address[Heinzer]{Department of Mathematics, Purdue University, West Lafayette, Indiana 47907-1395 U.S.A., {\rm heinzer@purdue.edu}}

\author[Loper]{K.~Alan Loper}
\address[Loper]{Department of Mathematics, Ohio State University,  Columbus, OH 43210-1174, lopera@math.ohio-state.edu}

\author[Olberding]{Bruce Olberding} 
\address[Olberding]{Department of Mathematical Sciences, New Mexico State University, Las Cruces, NM 88003-8001 U.S.A., bruce@nmsu.edu}

\author[Toeniskoetter]{Matthew Toeniskoetter} 
\address[Toeniskoetter]{Department of Mathematics and Statistics, Oakland University, Rochester, MI 48309-4479, toeniskoetter@oakland.edu}

\thanks{\\
Keywords: Regular local ring,
valuation ring,
integrally closed ring,
Zariski-Riemann space 
   \\
2020 MSC:
 13A18, 13H05
 \\The third author is  supported by NSF grant DMS-2231414. 
}

\begin{document}

\maketitle

\begin{abstract}
    Interval rings comprise a class of one-dimen\-sion\-al integrally closed local integral domains which are overrings of a two-dimen\-sion\-al regular local ring.  We construct them by intersecting carefully chosen valuation rings and prove that they have various nice properties.
    Our interest in these rings is that they represent a major stepping stone toward classifying all integrally closed overrings of a two-dimensional regular local ring.
\end{abstract}

\section{Introduction}

This paper is part of the authors' ongoing project to classify the integrally closed overrings of a two-dimensional regular local ring, where by \textit{overring}, we mean a ring between the base domain and its quotient field.
A first attempt at such a project might proceed by starting with a two-dimensional regular local ring $D$ and adding fractions, but if we add a finite number of fractions to $D$, we obtain rings that are quite similar to $D$.
However, above $D$, there are many integrally closed domains that do not fit into the standard categories: not Noetherian, not Krull, not Pr\"ufer.
Rather than a bottom-up approach of adjoining an infinite collection of fractions to $D$, we use a top-down approach of taking a collection of valuation domains above $D$ and intersecting them.

Let $V$ be an integral domain with quotient field $K$.
Then $V$ is a valuation ring if given any two nonzero elements $r,d \in V$, either $r/d$ or $d/r$ is in $V$.
A valuation ring $V$ is very close to being all of the quotient field $K$.
For example, the only domains properly between $V$ and $K$ are themselves valuation rings and are localizations of $V$.
If $V$ has finite Krull dimension, then there are only finitely many such intermediate rings.

On the other hand, if $D$ is a two-dimensional regular local ring with maximal ideal $\mathfrak{m} = (x,y)$ and quotient field $K$, then there are uncountably many valuation rings between $D$ and $K$.
This is an indication that valuation rings between $D$ and $K$ are abundant.
Another sign that valuation rings in the above setting exist in abundance is Krull's classical theorem stating that any integrally closed domain can be expressed as the intersection of the valuation rings between the integrally closed domain and its quotient field (see for example \cite[Corollary 5.22]{MR242802} or \cite[Theorem 10.4]{MR1011461}).

We will carefully choose collections of valuation rings to construct interesting examples by means of intersecting the valuation rings in these collections. 
The rings we consider here we call interval rings.
Their construction is remarkably simple.
For a closed interval $I = [a,b]$ of positive real numbers with $a<b$, the interval ring $T_I$ is obtained by intersecting a collection of two-dimensional valuation overrings of $D$ arising from the interval $I$.
We will see in a later paper that slightly generalized interval rings form the standard building blocks from which all integrally closed local overrings of $D$ that dominate $D$ can be constructed.

Among the properties we prove about interval rings $T_I$ are the following:
    \begin{itemize}
        \item $T_I$ is local.
        \item $T_I$ is one-dimensional but has many two-dimensional valuation overrings.
         \item $T_I$ is not Noetherian and not completely integrally closed.
        \item The complete integral closure ${T_I}^*$ of $T_I$ is Pr\"ufer, i.e.\ the valuation overrings of ${T_I}^*$ are precisely the localizations of ${T_I}^*$ at prime ideals.
        \item $T_I$ is vacant --- a technical term meaning that the valuation overrings are sparse in a certain topological sense
        which we explain later.
        \item $T_I$ is atomic, i.e., every non-zero, non-unit element of $T_I$ can be written as a finite product of irreducible elements.
        \item The natural topologies that can be placed on the space of valuation overrings of $T_I$ are induced by the ordinary real topology on the interval $I$.
    \end{itemize}

This is the third article in our recent project to classify integrally closed overrings of two-dimensional regular local rings.
In the first two articles \cite{MR4719886, 2406.10966}, the emphasis is somewhat different from what we present here.
However, we decided that, as far as possible, we would give the topic of interval rings a self-contained presentation in hopes that they will thereby be more readily understood.
In a future article, we will use interval rings as key components in a classification framework for integrally closed overrings of two-dimensional regular local rings.  

\section{Valuation rings}\label{SectionMonomialValuations}

We digress briefly to spell out some facts concerning valuation rings so that we will not have to make repeated digressions later.
Since we are working over a two-dimensional regular ring instead of a polynomial ring, the construction of our valuation rings, which follows well-known traditional methods in the polynomial ring setting, needs more care.

Fix, once and for all, a two-dimensional regular local ring $D$ with quotient field $K$ and a regular system of parameters $x,y$, so that its maximal ideal is $\mathfrak{m} = (x, y)$.
Following Granja \cite[Lemma 7]{MR2289617}, every nonzero element of $D$ can be written in the form $u_1\tau_1 + \cdots + u_n\tau_n$, where the $u_i$ are units in $D$, the $\tau_i$ are distinct pure monomials in $x$ and $y$, and the $\tau_i$ do not divide one another, i.e.\ $\tau_i D \not \subseteq \tau_j D$ for $i \ne j$.
Moreover, for this chosen regular system of parameters, these properties uniquely determine the $\tau_i$ (but not the $u_i$).

This representation yields the \textit{monomial valuations} on $K$, with respect to the regular system of parameters $\mathfrak{m} = (x, y)$, by assigning positive real values to the parameters $x$ and $y$.
The corresponding valuation rings are the \textit{monomial valuation rings}.
Since $x$ and $y$ will have positive real values, we may normalize so that $x$ has value~$1$.
Then for a positive real number $r$, we define a map $v_r :D \rightarrow {\mathbb{R}} \cup \{\infty\}$ by $v_r(0) = \infty$, $v_r(x^p y^q) = p + q r$, and $$v_r (u_1\tau_1 + \cdots + u_n \tau_n) = \min \{v_r (\tau_i):1 \leq i \leq n\},$$ where $u_1\tau_1+\cdots + u_n\tau_n$ is as above.
The map $v_r$ extends to a map on the quotient field $K$ of $D$ by setting $$v_r (f/g) = v_r (f) - v_r (g)$$ for all $f,g \in D$ with $g \ne 0$.
With this extension, it follows from \cite[Lemma 8]{MR2289617} that $v_r$ is a valuation on $K$, i.e.\ $0$ is the element of $K$ with value $\infty$ and for all $f,g \in K$, 
$$v_r(fg) = v_r(f)+v_r(g) {\mbox{ and }} v_r(f+g)\geq \min\{v_r(f),v_r(g)\},$$ with equality in the last case whenever $v_r(f) \ne v_r(g)$.
The subset of $K$, $$V_r = \{f \in K:v_r(f) \geq 0\}$$ is a ring with quotient field $K$ and is the valuation ring associated to $v_r$. 
The class of monomial valuations is central to our work, and we shall consider the topological properties of this collection of valuation rings.

Recall that a \textit{DVR} is a Noetherian valuation domain that is not a field, necessarily one-dimensional.
A monomial valuation ring $V_r$ is a DVR if and only if $v_r (y)$ is rational.

Let $\Val (D)$ be the set of valuation overrings of $D$ that dominate $D$.
We define three topologies on this space of valuation rings.

\begin{itemize} 
\item The \textit{Zariski topology} has a basis, and the \textit{inverse topology} has a closed subbasis, given by prescribing a finite set of elements with non-negative value,
\[  \{V \in \Val (D) : F \subseteq V\} \quad \text{where $F$ is a finite subset of $K$.} \]
\item The \textit{patch topology} has a basis consisting of clopen sets given by prescribing a finite set of elements with non-negative value and a finite set of elements with positive value,
\[  \{V \in \Val (D) : F \subseteq V , G \subseteq \mathfrak{m}_V \} \quad \text{where $F,G$ are finite subsets of $K$.} \]
\end{itemize}

The patch topology is the common refinement of the Zariski topology and the inverse topology, and it is Hausdorff.
Let $X \subseteq \Val (D)$ be a patch-closed set of valuation rings.
By \cite[Remark 2.2]{MR3105748}, the closures of $X$ in the Zariski and inverse topologies, respectively, are
    $$X^{\downarrow} = \{ V \mid V \subseteq W \text{ for some } W \in X \}, \:\: X^{\uparrow} = \{ V \mid V \supseteq W \text{ for some } W \in X \}.$$

\section{Evaluation Functions}\label{SectionEvaluation}

For a nonzero element $\alpha \in K$, consider its \textit{evaluation function} $\Eval_\alpha$, which is the function defined by evaluating the element $\alpha$ on each monomial valuation,
\begin{align*}
    \Eval_\alpha : \mathbb{R}^+ &\longrightarrow \mathbb{R}:
    \lambda \longmapsto v_\lambda (\alpha)
\end{align*}

The purpose of this section is to completely classify these evaluation functions as the real-valued functions satisfying the following conditions:
\begin{enumerate}
    \item $\Eval_\alpha$ is a continuous piecewise-linear function,
    \item $\Eval_\alpha$ has finitely many points where its slope changes, each of which is a rational number, and
    \item $\Eval_\alpha$'s linear components have integer slopes and $y$-intercepts.
\end{enumerate}
We show that $\Eval_\alpha$ satisfies these conditions, and that if $e : \mathbb{R}^+ \rightarrow \mathbb{R}$ is a function satisfying these conditions, there exists nonzero $\alpha \in K$ such that $e = \Eval_\alpha$.

We start with the following observations.

\begin{itemize}
\item $\Eval_x$ is the constant function, $\Eval_x (\lambda) = 1$ for all $\lambda \in \mathbb{R}^+$, and $\Eval_y$ is the identity function, $\Eval_y (\lambda) = \lambda$ for all $\lambda \in \mathbb{R}^+$.
\item If $\alpha = y^a x^b$ (i.e.\ $\alpha$ is a monomial), then $\Eval_\alpha(\lambda) = a \lambda + b$.

\item If $\Eval_f$ and $\Eval_g$ have no open intervals on which they agree, then $$\Eval_{f + g} (\lambda) = \min \{ \Eval_f (\lambda), \Eval_g (\lambda) \}.$$

\item If $\alpha = u_1\tau_1 + \cdots + u_n\tau_n \in D$, as in Granja's decomposition in Section~\ref{SectionMonomialValuations}, then each $\Eval_{\tau_i}$ is represented by an expression of the form $a_i + b_i \lambda$, corresponding to $n$ distinct lines in the plane.
Hence
    \begin{eqnarray*}
    \Eval_\alpha (\lambda) & = & \min \{ \Eval_{\tau_i} (\lambda) \mid 1 \le i \le n \} \\
    &= & \min \{ a_i + b_i \lambda \mid 1 \le i \le n \}.
    \end{eqnarray*}

\item If $\alpha = \beta / \gamma \in K$, where $\beta, \gamma \in D$, then $\Eval_\alpha = \Eval_\beta - \Eval_\gamma$.
Thus $\Eval_\alpha$ is also a continuous piecewise linear function with finitely many pieces.

\item If the slope of $\Eval_\alpha (\lambda)$ changes at a rational number $p / q$, written in lowest form, then the slope changes by an integer multiple of $q$.
To see this, say $\Eval_\alpha$ is $a \lambda + b$ near the left of $p / q$ and $c \lambda + d$ near the right of $p / q$, where $a, b, c, d \in \mathbb{Z}$.
By continuity, $a (p / q) + b = c (p / q) + d$, so $a p + b q = c p + d q$.
Then $(a - c) p = (d - b) q$, so since $p$ and $q$ are relatively prime, $q$ divides $(a - c)$.

\end{itemize}

Thus the evaluation function satisfies the conditions listed above.

Conversely, let $e : \mathbb{R}^+ \rightarrow \mathbb{R}$ be a function satisfying these conditions.
We use induction on the number of points $n$ where the slope of $e$ changes.

In the base case of $n = 0$, the function $e$ has no changes of slope, so $e (\lambda) = a \lambda + b$ for some $a, b \in \mathbb{Z}$.
Then $e = \Eval_f$, where $f = y^a x^b$.

We also show the case $n = 1$, where the function $e$ has one change of slope, say at the rational number $p / q$ written in lowest form.
Write $e (\lambda) = a \lambda + b$ on $(0, p / q]$ and $e (\lambda) = c \lambda + d$ on $(p / q, \infty)$.
Either $a > c$ or $a < c$.
By replacing $e$ with $- e$ in the case where $a < c$ and then taking reciprocals, we may assume without loss of generality that $a > c$.
Then consider the elements $g = y^a x^b$ and $h = y^c x^d$, so $\Eval_g (\lambda) = a \lambda + b$ and $\Eval_h (\lambda) = c \lambda + d$.
Since $a > c$ and $\Eval_g (p / q) = \Eval_h (p / q)$, it follows that $\Eval_g (\lambda) < \Eval_h (\lambda)$ on $(0, p / q)$ and $\Eval_g (\lambda) > \Eval_h (\lambda)$ on $(p / q, \infty)$.
Thus $\Eval_{g + h} = e$, which completes the $n = 1$ case.

In the inductive step, suppose such an element exists for every function with $n$ changes in slope, where $n \ge 1$, and suppose $e$ has $n + 1$ changes in slope.
Suppose the rightmost input where $e$ has a change in slope is $p / q$, written in lowest form, and let $e'$ be the function with this change in slope removed.
That is, given that $e (\lambda) = a \lambda + b$ in the interval $(p / q - \epsilon, p / q]$ for some $0 < \epsilon < p / q$, then $e' (\lambda) = a \lambda + b$ on $[p / q, \infty)$.
Write $e (\lambda) = c \lambda + d$ on $[p / q, \infty)$.
As in the $n = 1$ case, we may assume without loss of generality that $a > c$.
By the induction hypothesis, there exists nonzero $g \in K$ with $\Eval_g = e'$.

It is natural to consider the element $g + y^c x^d$, which agrees with $e$ on the interval $(p / q - \epsilon, \infty)$, but it may be the case that $\Eval_{y^c x^d} (\lambda) \le \Eval_g (\lambda)$ in some segment of $(0, p / q)$.
To ensure this does not occur, choose a sufficiently large integer $M$ such that $$(c - M q) (\lambda - p / q) + e (p / q) \ge e (\lambda)$$ on $(0, p / q)$.
Then consider the piecewise-linear function $e''$, where
    $$e'' (\lambda) = \begin{cases}
        c \lambda + d & \text{if } \lambda \ge p / q,\\
        (c - M q) \lambda + (d + M p) & \text{if } 0 < \lambda \le p / q.
    \end{cases}$$
Then, for all $0 < \lambda \le p / q$,
\begin{align*}
    e' (\lambda)
        &= e (\lambda) \\
        &\le (c - M q) (\lambda - p / q) + e (p / q) \\
        &= (c - M q) \lambda - (c p / q - M p) + (c p / q + d)  \\
        &= (c - M q) \lambda + (d + M p) \\
        &= e'' (\lambda).
\end{align*}
Thus $e (\lambda) = \min \{ e' (\lambda), e'' (\lambda) \}$ for all $\lambda \in \mathbb{R}^+$.
By the $n = 1$ case, there exists nonzero $h \in K$ such that $e'' = \Eval_h$.
Then $e = \Eval_{g + h}$.

\section{Krull-constructed rings}\label{SectionKCR}

Now that the monomial valuation rings are defined, we turn our focus to the rings associated to them.
The valuations $v_r$ are defined for positive real numbers $r$, with associated valuation rings $V_r$.
For a rational value of $r$, the residue field of $V_r$ is a simple transcendental extension $k (t)$ of the residue field $k$ of $D$.
In this case, $V_r$ has a ``projective line'' of rank two valuation subrings associated to the function field $k (t)$, and these valuation rings, in a strong sense, are ``infinitesimally close'' to $V_r$.
The intersection of any proper subcollection of these rank two valuation rings is a Pr\"ufer domain whose maximal ideals correspond to the valuation rings in the subcollection,
but the intersection of all of them is a local ring, which we call the \textit{Krull-constructed ring} associated to $V_r$ because it originates with a construction due to Krull \cite[p. 670]{MR1545646}, as described by Seidenberg \cite[p. 509]{MR54571}.
It is a pullback of $V_r$, sharing a maximal ideal with $V_r$ but having fewer units.

Formally, for a rational number $r$, let $\psi_r$ be the quotient map from $V_r$ onto its residue field $k(t)$.
Let $\phi(t)$ be an irreducible polynomial in the subring $k[t]$ of $k(t)$.
Then the localization $k[t]_{(\phi(t))}$ is a DVR.  
The inverse image $$W_{\phi ,r} =  \psi_r^{-1}(k[t]_{(\phi(t))}),$$ which is the pullback of the DVR $k[t]_{(\phi(t))}$ in $V_r$, is then a rank two valuation overring of $D$ with maximal ideal generated by any lift of $\phi(t)$.
In addition, the final rank two valuation subring of $V_r$ is the inverse image of the ``point at infinity,'' $$W_{1/t, r} = \psi_r^{-1} (k[1/t]_{(1/t)}),$$ whose maximal ideal is generated by any representative of $1/t$.
Note that the unique height one prime ideal of $W_{\phi, r}$ is equal to the maximal ideal of $V_r$.

The intersection of these DVRs of $k(t)$, as $\phi(t)$ ranges over the irreducible polynomials in $k[t]$ together with the point at infinity, is equal to $k$.
The intersection of their pullbacks, the rank two valuation subrings of $V_r$, is what we call the \textit{Krull-constructed ring (KCR)} associated to $V_r$, say $U_r$.
Then $U_r = \psi_r^{-1} (k)$, where $k$ is viewed as a subfield of the residue field $k(t)$ of $V_r$, is a local ring whose maximal ideal is equal to the maximal ideal of $V_r$.

Although $U_r$ has the same non-units as $V_r$, it has many fewer units.
The unit group of $U_r$ is the intersection of the unit groups of the rank two valuation subrings of $V_r$.

A KCR can be thought of as binding together valuation rings that are ``close'' together, which causes their maximal ideals to ``collapse'' together and produce a local ring.
In turn, we shall bind together KCRs that are ``close'' to one another, also, to produce local rings.

\section{The construction of interval rings}\label{SectionConstruction}

We continue to work with the two-dimensional regular local ring $D$ with maximal ideal $\mathfrak{m} = (x, y)$.
The interval ring will be an overring of $D$, by which we mean that it lies between $D$ and its quotient field.
We construct it by choosing an interval $I = [a,b]$ of positive real numbers with $a < b$ and defining the interval ring $T_I$ for $I$ to be the intersection of all the valuation rings $V$ such that $D \subseteq V \subseteq  V_r$ for some $r \in I$, where, as usual, $V_r$ is the valuation ring for the valuation $v_r$ defined in Section~\ref{SectionMonomialValuations}.

For the rational values $r \in I$, the intersection of the rank two valuation rings $V$ such that $D \subseteq V \subseteq V_r$ is precisely the Krull-constructed ring $U_r$ as in Section~\ref{SectionKCR}.
In fact, the rank one valuation rings $V_r$, for $r \in I$, are redundant in the construction, and $T_I$ can be defined completely as the intersection of the $U_r$, for rational $r \in I$.
To see this, let $f$ be a nonzero element in the intersection of the $U_r$, for rational $r \in I$, and consider its evaluation map $\Eval_f : I \rightarrow \mathbb{R}$, which is a continuous function.
Since $f \in U_r$ for each rational $r \in I$, $\Eval_f (r) \ge 0$ for each rational $r \in I$.
By continuity, $\Eval_f (r) \ge 0$ for all $r \in I$, so $f \in V_r$ for all $r \in I$.
Since also $f \in U_r$ for each rational $r \in I$, we conclude $f \in T_I$ as in the original definition.

As a consequence of the definition of $T_I$, the intersection $M_I$ of the maximal ideals of the $V_r$, for $r \in I$, is contained in $T_I$.
Moreover, $M_I$ is equal to the intersection of the maximal ideals of the $V_r$, for rational $r \in I$.
Ultimately, we will show in Section~\ref{SectionLocal} that $M_I$ is the unique maximal ideal of $T_I$, and we will show in Section~\ref{SectionOneDimensional} that $M_I$ is the only nonzero prime ideal in $T_I$.

Even for a fixed interval $I$, the ring $T_I$ constructed in this manner depends on the choice of regular system of parameters $\mathfrak{m} = (x, y)$.
With a different choice, the ring $T_I$ constructed may be different, possibly non-isomorphic.

\section{The internal rank two valuation rings}\label{SectionInternal}

Let $r$ be a positive rational number, say $r = p / q$ in reduced form, and write
the residue field of $V_r$ as   $k (t)$, where $t$ is the image of $y^q / x^p$ and $k$ is the residue field of $D$.
As in Section~\ref{SectionKCR}, every rank two valuation subring of $V_r$ arises from a DVR of $k(t)$, and we now give an alternate realization of $W_{t, r}$ and $W_{1/t, r}$ as limits.
In the notation of Section~\ref{SectionMonomialValuations}, define the sets,
\begin{align*}
    {W_r}^+ &=
        \{ f \in K \mid \Eval_f (s) \ge 0 \text{ for all } s \in (r, r + \delta) \text{ for some } \delta > 0 \}, \\
    {M_r}^+ &=
        \{ f \in K \mid \Eval_f (s) > 0 \text{ for all } s \in (r, r + \delta) \text{ for some } \delta > 0 \},
\end{align*}
and symmetrically,
\begin{align*}
    {W_r}^- &=
        \{ f \in K \mid \Eval_f (s) \ge 0 \text{ for all } s \in (r - \delta, r) \text{ for some } \delta > 0 \}, \\
    {M_r}^- &=
        \{ f \in K \mid \Eval_f (s) > 0 \text{ for all } s \in (r - \delta, r) \text{ for some } \delta > 0 \}.
\end{align*}
We claim that ${W_r}^+ = W_{t, r}$ and ${W_r}^- = W_{1/t, r}$, with corresponding maximal ideals ${M_r}^+$ and ${M_r}^-$.
We prove this for ${W_r}^+$, with the ${W_r}^-$ case similar.

To verify that ${W_r}^+ = W_{t, r}$, we first show that ${W_r}^+$ is a valuation ring, and then that ${W_r}^+ \subseteq V_r$, and that the preimage $y^q / x^p$ of $t$ is a nonunit in ${W_r}^+$.
As in Section~\ref{SectionKCR}, this identifies ${W_r}^+$ as the pullback of $k [t]_{(t)}$ in $V_r$.
This will follow from the nature of the evaluation function $\Eval_f$ as a continuous piecewise linear function, with finitely many pieces defined on rational intervals.

Let $f \in {W_r}^+$, so that $\Eval_f$ is nonnegative on the interval $(r, r + \delta)$ for some $\delta > 0$.
By continuity of $\Eval_f$, it follows that $\Eval_f (r) \ge 0$, and so $f \in V_r$.
Thus ${W_r}^+ \subseteq V_r$.

The set ${W_r}^+$ is a subring of $K$, since for two elements in ${W_r}^+$ we may choose a common interval $(r, r + \delta)$ on which both evaluation functions are nonnegative and use the valuation inequalities on this interval.
To show that ${W_r}^+$ is a valuation ring, let $f \in K$ be nonzero.
If $\Eval_f (r) > 0$, i.e.\ if $f \in M_r$, then by continuity of $\Eval_f$, $f \in {M_r}^+$.
Similarly, if $\Eval_f (r) < 0$, i.e.\ if $\frac{1}{f} \in M_r$, then $\frac{1}{f} \in {M_r}^+$.
If $\Eval_f (r) = 0$, then since $\Eval_f$ is piecewise linear with finitely many pieces, there exists $\delta > 0$ such that $\Eval_f$ is linear on $[r, r + \delta]$.
Thus the sign of $\Eval_f$ is constant on $(r, r + \delta)$.
If the sign of $\Eval_f$ is positive on $(r, r + \delta)$, then $f \in {M_r}^+$; if negative, $\frac{1}{f} \in {M_r}^+$; if zero, then $f, \frac{1}{f} \in {W_r}^+$.
Since either $f$ or $\frac{1}{f}$ is in ${W_r}^+$ in any case, it follows ${W_r}^+$ is a valuation ring contained in $V_r$.
It is readily checked that ${M_r}^+$ is precisely the set of nonunits of the valuation ring ${W_r}^+$, so ${M_r}^+$ is the maximal ideal of ${W_r}^+$.

Finally, observe that, for all $h > 0$,
    \begin{eqnarray*}
    \Eval_{y^q/x^p} (r + h) 
        &= & v_{p/q + h} (y^q / x^p) \\
        &= &  q \, v_{p/q + h} (y) - p \, v_{p/q + h} (x) \\
        & = & q (p/q + h) - p
        \:\: = \:\; q h,
        \end{eqnarray*}
so that $\Eval_{y^q/x^p} (r + h) > 0$ for all $h > 0$.
Thus $y^q/x^p \in {M_r}^+$, distinguishing ${W_r}^+$ as the rank $2$ valuation subring $W_{t, r}$ of $V_r$.

For a given interval $I = [a, b]$ of positive real numbers with $a < b$,
we call the rank $2$ valuation rings ${W_r}^+$ and ${W_r}^-$, where $r$ is a rational number in $(a, b)$, the \textbf{internal} rank $2$ valuation rings of the interval.
In addition, if $a$ is rational, ${W_a}^+$ is an internal rank $2$ valuation ring, and if $b$ is rational, ${W_b}^-$ is an internal rank $2$ valuation ring.
We call all other rank $2$ valuation rings the \textbf{external} rank $2$ valuation rings of the interval.

\section{Interval rings have few valuation overrings}\label{ValuationOverringsSection}

Although an interval ring has uncountably many one-dimensional valuation overrings (namely, the $V_r$, for $r \in I$), there is little diversity in this uncountable collection.
They are each built with a simple algorithm based on a single positive real number, and in that sense, they are almost identical to one another; they only differ in terms of which real number they are based on, and whether that number is rational or irrational.

Again, we choose two real numbers $0 < a < b$, let $I = [a,b]$ and build the interval ring $T_I$ as in Section~\ref{SectionConstruction}.
We first prove that the domains $V_r$ are the only one-dimensional valuation overrings of $T_I$.
This is a powerful result that will make many later results easier to prove.  

Suppose to the contrary that $W$ is a one-dimensional valuation overring of $T_I$ which is not one of the $V_r$.
We claim that both $x$ and $y$ must lie in the maximal ideal of $W$.
Recall that $W$ is a valuation overring of the two-dimensional regular local ring $D$.
If $W$ does not contain both $x$ and $y$ as non-units, then the center of $W$ on $D$ is a height-one prime.
Since $D$ is a UFD, this prime is generated by an irreducible element $z \in D$, and the corresponding localization is $D_{(z)}$.
We show that no such localization can contain $T_I$.
Consider our interval $I = [a,b]$. For any $r \in I$ and any positive
integer $n$ we have $v_r(x^n) = n$ and $v_r(y^n) = rn$.  
Since the evaluation function $\Eval_z$ is continuous, it attains its maximum value on the closed and bounded interval $I$.
It follows then that for sufficiently large $n$, both $x^n/z$ and $y^n/z$ are in the intersection of the maximal ideals of the $V_r$ and hence, as noted in Section~\ref{SectionConstruction}, in the ring $T_I$.
Since $z$ cannot divide both $x^n$ and $y^n$, at least one of $x^n/z$ and $y^n/z$ is not in $D_{(z)}$.
Thus, the localization of $D$ at $(z)$ is not an overring of $T_I$.  

Let $w$ be a real-valued valuation associated with $W$, so that $w (x) > 0$ and $w (y) > 0$, and normalize $w$ so that $w(x) = 1$.
We consider the cases where $w (y)$ lies in or out of the interval $I$.
In each case we aim to find an element which is in $T_I$ but not in $W$, which would contradict the  assumption that $W$ is an overring of $T_I$.

In the first case, $w(y) > b$.
Choose a rational number $m/n$ such that $ b < m/n < w(y)$ and consider the element $x^m / y^n$. Then,

\[w \left( \frac{x^m}{y^n} \right) = m - nw(y) < m-n(m/n) = 0,\]
so $w(x^m / y^n) < 0$ and $x^m / y^n \notin W$.
On the other hand, since we have $b < m/n$, it follows that $m-nb > 0$. For each real number $r$ with $a \leq r \leq b$,
\[  v_r\left( \frac{x^m}{y^n} \right) = m-nr > m-nb > 0, \]
which implies that $x^m / y^n$ is in the intersection of the maximal ideals of $V_r$ and hence in the ring $T_I$.

In the second case, $0 < w(y) < a$.
An argument parallel to the one above constructs an element in $T_I$ that is not in $W$.  

In the final case, $w(y) \in I$.
Let $r = w(y)$, so the two valuations $w$ and $v_r$ agree at $x$ and $y$.
Clearly, they also agree on monomials of the form $u x^n y^m$, where $u \in D$ is a unit.
But by assumption, there must be an element $h \in D$ for which $w$ and $v_r$ do not agree.  
Recall that the monomial valuation $v_r$, having fixed $v_r (x) = 1$ and $v_r (y) = r$, assigns each element of $D$ the unique minimal possible value.
Since $w$ and $v_r$ disagree at $h$, then we must have $v_r (h) < w (h)$.
As in Section~\ref{SectionMonomialValuations}, express $h$ using Granja's decomposition as a sum of products of units in $D$ and monomials in the $x$ and $y$, say $$h = k_1 + \cdots + k_m + g,$$ where $m \ge 1$ and $g$ has minimal $v_r$ value among the terms, i.e.\ $v_r (h) = v_r (g) = w (g)$.
Denote $$f = h - g = k_1 + \cdots + k_m.$$
Since $w (h) > w (g)$, it follows that $w (h) = w (f + g) > \min \{ w (f), w (g) \}$.
Thus $w (f) = w (g)$, so in particular $w (f) = w (g) = v_r (h) = v_r (f) = v_r (g)$.

Let $\alpha = \frac{f}{h}$.
Since every monomial of $f$ is also a monomial of $h$, it follows that $v_p (f) \ge v_p (h)$ for all $p > 0$.
Thus $v_p (\alpha) \ge 0$ for all $p > 0$.
Then for all $n > 0$ and all $p > 0$, since $v_p (x) = 1$ and $v_p (\alpha^n) \ge 0$, it follows that $v_p (x \alpha^n) > 0$.
Thus $x \alpha^n \in T_I$ for all $n > 0$.

However, $w (h) > w (f)$, so $w (\alpha) < 0$.
Then there exists an integer $n$ such that $w (x \alpha^n) < 0$.
Hence the element $x \alpha^n$ is in $T_I$ but not in $W$, so we conclude $W$ cannot be an overring of $T_I$.\footnote{
Our original argument used an assumption that the residue field has more than two elements to construct an element $\alpha$ such that $v_p (\alpha) = 0$ for all $p \ge 0$.
We would like to thank the referee for suggesting this simpler and more general argument.
}

We finish this section by concluding that the valuation rings used to construct $T_I$ are all the valuation overrings of $T_I$.
Namely, since every two-dimensional valuation overring of $T_I$ is contained in a one-dimensional valuation overring of $T_I$, it follows that every two-dimensional valuation overring of $T_I$ is contained in one of the $V_r$, and hence is one of the valuation rings we used in Section~\ref{SectionConstruction} to construct $T_I$.
Thus the valuation overrings of $T_I$ are precisely,
\begin{itemize}
    \item the rational monomial valuation rings $V_r$,
    \item the irrational monomial valuation rings $V_r$,
    \item the internal rank $2$ valuation overrings associated to the $V_r$, and
    \item the external rank $2$ valuation overrings associated to the $V_r$.
\end{itemize}

\section{Interval rings are local}\label{SectionLocal}

In this section, we show interval rings are local.
Since an interval ring $T_I$ is defined as the intersection of valuation rings, the essence of the proof is to show that every non-unit of $T_I$ is a non-unit of the valuation rings $V_r$, $r \in I$.
Once this is shown, we use the observation from Section~\ref{SectionConstruction} that the intersection $M_I$ of the maximal ideals of the $V_r$ is contained in $T_I$ to conclude that the non-units in $T_I$ form an ideal and hence $T_I$ is local with maximal ideal $M_I$.

Let $r \in I$ be rational, and recall from Section~\ref{SectionKCR} that the group of units of $U_r$ is the intersection of the groups of units of the rank $2$ valuation rings associated to $V_r$.
In particular, if $f \in U_r$ and $\Eval_f (r) = 0$, then $f$ is a unit in each rank $2$ valuation ring associated to $V_r$.

Assume the notation of the preceding sections, let $d$ be a nonzero non-unit in $T_I$, and suppose to the contrary that $d$ is a unit in $V_{r_0}$ for some $r_0 \in I$.
Consider the evaluation function $\Eval_d : I \rightarrow \mathbb{R}$.
Since $d$ is a unit in $V_{r_0}$, we have $\Eval_d(r_0)=0$.
If $\Eval_d (q) = 0$ for all rational $q \in I$, then the observation above implies that $d$ is a unit in $U_q$ for all rational $q \in I$, so since $T_I$ is the intersection of the $U_q$, $d$ would be a unit in $T_I$, contradicting the choice of $d$ as a non-unit.
Therefore, $\Eval_d$ must take some positive values in the interval $(a, b)$.
Since $\Eval_d$ is a continuous piecewise linear function with finitely many pieces and rational breakpoints, and since it has a zero in $I$, there exists a rational number $r \in I$ with $\Eval_d(r)=0$ and a $\delta>0$ such that $\Eval_d(s)>0$ on either $(r,r+\delta)\cap I$ or $(r-\delta,r)\cap I$.
Therefore, $d \in V_r$ is a unit, but either $d \in {M_r}^+$ or $d \in {M_r}^-$ as in Section~\ref{SectionInternal}.
Since ${W_r}^+$ and ${W_r}^-$ are rank $2$ valuation subrings of $V_r$, it follows that $d$ is a non-unit in some rank $2$ valuation subring of $V_r$.
This contradiction shows that every nonzero non-unit of $T_I$ lies in $M_I$; conversely, since $T_I \subseteq V_r$ for each $r \in I$, every element of $M_I$ is a non-unit of $T_I$.
Thus the non-units of $T_I$ are exactly $M_I$, and $T_I$ is local with maximal ideal $M_I$.

\section{Interval rings are one-dimensional}\label{SectionOneDimensional}

The goal of this section is to show that interval rings are one-dimensional.
As above, let $I=[a,b]$ be a closed interval of positive real numbers with $a< b$, and let $T_I$ be the corresponding interval ring with maximal ideal $M_I$.
We show that $M_I$ is the radical of the principal ideal $\alpha T_I$ for any nonzero non-unit $\alpha \in M_I$, from which the one-dimensionality of $T_I$ follows.

Let $\alpha,\beta \in M_I$ be nonzero, and consider the evaluation functions $\Eval_\alpha, \Eval_\beta : I \rightarrow \mathbb{R}^+$ as in Section~\ref{SectionEvaluation}.
Since $I$ is compact, $\Eval_\alpha, \Eval_\beta$ have minimal and maximal positive values, so there exists some positive integer $k$ such that $k \cdot \min \Eval_\beta (I) > \max \Eval_\alpha (I)$.
Then for all $\lambda \in I$,
    $$v_\lambda \left( \frac{\beta^k}{\alpha} \right) = k \cdot v_\lambda (\beta) - v_\lambda (\alpha) \ge k \cdot \min \Eval_\beta (I) - \max \Eval_\alpha (I) > 0,$$
proving that $\frac{\beta^k}{\alpha} \in M_I$ and thus $\beta^k \in \alpha T_I$.
This shows that $M_I$ is the radical of $\alpha T_I$, and so $T_I$ is one-dimensional.

\section{The real number topology on valuation overrings}\label{SectionRealTopology}

Let $X = \Val (T_I)$, the set of valuation rings that birationally dominate the local ring $T_I$. 
We shall prove that by ``gluing together'' each rank $1$ valuation ring of $X$ with each of its valuation subrings, the quotient space becomes homeomorphic to the real interval.

Consider the space $X$ with the patch topology as defined in Section~\ref{SectionMonomialValuations}.
Since $T_I$ is one-dimensional, $X$ is the set of all valuation overrings of $T_I$ except for the quotient field.
Since this is the set of valuation overrings of $D$ that dominate the local ring $T_I$, this set is patch closed in the space of all valuation overrings of $D$.
The latter set is compact in the patch topology (it is a spectral space---see the discussion in \cite{MR4394424} for a range of references regarding this fact---and the patch topology on a spectral space is compact \cite[Theorem~1]{MR251026}), and so the closed subset $X$ is also compact in the patch topology.

Let $\sim$ be the equivalence relation such that for all $V, W \in X$, we have $V \sim W$ if $V$ and $W$ have the same rank $1$ overring.
Thus for any $r \in I$ and any of the two-dimensional valuation subrings $W_1$ and $W_2$ of $V_r$ described in Section~\ref{SectionKCR}, we have $V_r \sim W_1 \sim W_2$.

Define $\tilde{X} = X / \mathord{\sim}$ to be the quotient space, which is then also compact.
Consider the map $\phi : X \rightarrow I$, where for each $W \in X$, $\phi (W) = r$ if $V_r$ is the one-dimensional overring of $W$.
This map induces a bijection $\tilde{\phi} : \tilde{X} \rightarrow I$.
Since $\tilde{\phi}$ is a bijection from a compact space to a Hausdorff space, to see that $\tilde{\phi}$ is a homeomorphism, it suffices to show that it is continuous.
Thus it suffices to show that the preimage of any basic closed set $[\alpha, \beta] \subseteq I$ is closed.

Let $J$ be a closed subinterval $J = [\alpha, \beta] \subseteq I$ with $\alpha < \beta$.
Then $\phi^{-1}(J)$ is the collection of valuation rings in the definition of an interval ring that
are intersected to construct the interval ring $T_J$.
As in Section~\ref{ValuationOverringsSection}, this is the set of all valuation overrings of $T_J$ except the quotient field, so $\phi^{-1} (J) = \Val (T_J)$, which is closed in the patch topology.
We conclude that the map $\phi$ is continuous, which implies the quotient map $\tilde{\phi}:\tilde{X} \rightarrow I$ is continuous and hence a homeomorphism.

As discussed in Section~\ref{SectionMonomialValuations}, for a given subset of $X$ the difference between the closures in the patch and Zariski topologies involves only adding or deleting subrings or overrings.
Hence the quotient spaces for the patch and Zariski topologies are the same, so the quotient map $\tilde{\phi}$ is a homeomorphism with respect to the Zariski topology also.

The space $X$ is connected in the Zariski topology.
This can be shown directly from the fact that $X / \sim$ is a connected space and the fibers of $\sim$ are connected. This fact can also be obtained as a corollary of {\cite[Theorem~3.8]{MR4719886} or \cite[Theorem~1.2]{2406.10966}.}

\section{Almost Integral Elements}

To show in the next sections that interval rings are vacant and that the complete integral closure of an interval ring is Pr\"ufer, we use the existence of classes of explicitly-constructed almost integral elements.
Recall that, if $A \subseteq B$ is an extension of domains, then an element $x \in B$ is \textit{almost integral over $A$} if $A [x]$ is contained in a finitely generated $A$-module, i.e.\ if there exists nonzero $a \in A$ such that $a x^n \in A$ for all $n \ge 1$.

The set of almost integral elements over $T_I$ in $K$ is called its \textit{complete integral closure} ${T_I}^*$.
Since $T_I$ is a one-dimensional local domain, it satisfies the conditions of \cite[Proposition 4]{MR220714}, so the complete integral closure of $T_I$ is the intersection of the rank one valuation rings that contain $T_I$.
That is,
    $${T_I}^* = \bigcap_{\lambda \in I} V_\lambda = \{ f \in K \mid v_{\lambda} (f) \ge 0 \text{ for all } \lambda \in I \}.$$

For any rational number $r = p / q \in [a, b]$, written in lowest terms, consider the ratio $y^q / x^p$, which is residually transcendental for the valuation ring $V_r$.
The following lemma shows how to create almost integral elements from $y^q / x^p$ while controlling where the element has positive value.

\begin{lemma}\label{ElementLemma}
    Let $r \in [a, b]$ be rational, and let $W$, $W'$ be any pair of distinct two-dimensional valuation overrings of $D$ contained in $V_r$, neither of which is ${W_r}^-$ or ${W_r}^+$.
    Then for each of the following conditions, there exists an element $f$, residually transcendental only for $V_r$ and almost integral over $T_I$, that satisfies the condition and is such that $f$ is a unit in all two-dimensional valuation rings contained in $V_r$ not mentioned.
    \begin{enumerate}
        \item\label{LeftElement} $v_\lambda (f) > 0$ for $\lambda < r$ (so $f \in {M_r}^-$), $v_\lambda (f) = 0$ for $\lambda \ge r$, and $\frac{1}{f}$ is a non-unit in $W$.
        \item\label{RightElement} $v_\lambda (f) = 0$ for $\lambda \le r$, $v_\lambda (f) > 0$ for $\lambda > r$ (so $f \in {M_r}^+$), and $\frac{1}{f}$ is a non-unit in $W$.
        \item\label{BothElement} $v_r (f) = 0$, $v_\lambda (f) > 0$ for $\lambda \ne r$ (so $f \in {M_r}^- \cap {M_r}^+$), and $\frac{1}{f}$ is a non-unit in $W$.
        \item\label{SideElement} $v_\lambda (f) = 0$ for all $\lambda$, $\frac{1}{f}$ is a non-unit in $W$, and $f$ is a non-unit in $W'$.
    \end{enumerate}
\end{lemma}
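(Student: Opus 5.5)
Write $r = p/q$ in lowest terms and $z = y^q/x^p$, whose image in the residue field $k(t)$ of $V_r$ is $t$. Since $W, W'$ are distinct from ${W_r}^{\pm}$, they are $W_{\phi,r}$ and $W_{\phi',r}$ for distinct monic irreducible polynomials $\phi, \phi' \in k[t]$, each different from $t$ (the point at infinity corresponds to ${W_r}^-$, and $t$ to ${W_r}^+$). I would build every $f$ as a product of three kinds of factors. First, a lift $P\in D[z]$ of $\phi$: choose a polynomial $P(z)=z^d+c_{d-1}z^{d-1}+\cdots+c_0$ with $c_i\in D$ lifting the coefficients of $\phi$, and $c_0$ a unit since $\phi\neq t$. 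Second, the monomial $z$ itself. Third, suitable monomials in $x,y$ that are units at $r$. Because $D\subseteq V_r$ and $z$ is a unit in $V_r$, the image of $P$ in $k(t)$ is $\phi(t)$. Hence $P$ is a non-unit exactly in $W_{\phi,r}$, and a unit in every other $W_{\psi,r}$ with $\psi\ne\phi$ and in ${W_r}^{+}$ (since $\phi(0)\neq0$). In ${W_r}^-$, i.e.\ at infinity, $P$ has a pole of order $d$.

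The evaluation functions are computed from Section~\ref{SectionEvaluation}. We have $\Eval_z(\lambda)=q\lambda-p$, which is positive for $\lambda>r$ and negative for $\lambda<r$. For $P$, Granja's decomposition gives $\Eval_P(\lambda)=\min\{0,\,d(q\lambda-p)\}$ near $r$. I would verify this globally: the terms $c_i z^i$ have values at least $i(q\lambda-p)$, with equality attained at the extreme terms $i=0$ and $i=d$, and these are monomials not cancelled in the numerator $x^{pd}P$. So $\Eval_P=0$ for $\lambda\ge r$ and $\Eval_P=d(q\lambda-p)<0$ for $\lambda<r$. Writing $g=1/P$, the function $\Eval_g$ is $0$ on $[r,\infty)$ and positive on $(0,r)$, $1/g=P$ is a non-unit in $W$, and $g$ is a unit in all other two-dimensional subrings of $V_r$ except ${W_r}^-$, where $g\in{M_r}^-$. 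So $f=g=1/P$ settles condition (\ref{LeftElement}). For (\ref{RightElement}), the analogous factor on the right is $P/z^{d}$, i.e.\ the reversed polynomial. Its image $\phi(t)/t^d$ is a unit at infinity and at every $\psi\neq\phi,t$, and it has a pole at $t$. Its evaluation function is $\min\{-d(q\lambda-p),0\}$, which is $0$ for $\lambda\le r$ and negative for $\lambda>r$. So $f=z^d/P$ works. For (\ref{BothElement}), I would take $f=z^{d}/P^{2}$: its residue is $t^d/\phi^2$, which has a zero at $t$, a zero of order $d$ at infinity, and a pole at $\phi$ only. Its evaluation is the sum of the two previous ones, positive away from $r$ and zero at $r$. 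For (\ref{SideElement}), I would take $f=P'/P$ with $P'$ a lift of $\phi'$ of the same degree $d$ (replace $\phi'$ by a suitable power, or multiply by a power of $z$ and adjust, if the degrees differ). Then $\Eval_f\equiv0$ on all of $\mathbb{R}^+$, the residue $\phi'/\phi$ is a unit at infinity and at $t$, $1/f$ is a non-unit in $W$, and $f$ is a non-unit in $W'$. If the degrees differ, take $f={P'}^{d}/P^{d'}$ instead.

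In every case $\Eval_f\ge0$ on all of $\mathbb{R}^+$, so $f\in{T_I}^*$ by the description of ${T_I}^*$ as $\bigcap_\lambda V_\lambda$. Hence $f$ is almost integral over $T_I$; concretely, $x f^n\in T_I$ fails only via rank-two conditions, and a nonzero element of $M_I$ with enough positivity handles these as in Section~\ref{SectionOneDimensional}. "Residually transcendental only for $V_r$" means $v_\lambda(f)>0$ or $\Eval_f$ has nonzero slope near every $\lambda\ne r$, so the image of $f$ in the residue field of $V_\lambda$ is $0$, $\infty$, or in case (\ref{SideElement}) (where $\Eval_f\equiv0$) a constant. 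That constant must be checked to be algebraic over $k$.

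I expect the main obstacle to be exactly this last point and the global computation of $\Eval_P$. For $\lambda\ne r$ the leading monomial of $P'$ and $P$ at $\lambda$ is $c_0$ for $\lambda<r$ and $z^d$ for $\lambda>r$, so the residue of $P'/P$ in $V_\lambda$ is the unit ratio $\bar c_0'/\bar c_0\in k$ or $1$. I would make this precise by clearing denominators with $x^{pd}$ and invoking Granja's uniqueness of the monomial support, together with the fact that the unit coefficients reduce to elements of $k$.
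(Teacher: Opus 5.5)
Your construction is the paper's: $1/P$ for (1), the reversed-polynomial element $z^d/P$ (the paper's $1/\Psi(t^{-1})$ up to a unit) for (2), their product $z^d/P^2$ for (3), and ${P'}^{d}/P^{d'}$ for (4), with almost integrality read off from ${T_I}^*=\bigcap_{\lambda} V_\lambda$. One small correction: in your last paragraph the dominant term is $z^d$ for $\lambda<r$ and $c_0$ for $\lambda>r$, as in your own earlier computation of $\Eval_P$. That leading-term argument, not a ``nonzero slope'' criterion, is also what gives residual algebraicity on the half-lines where $\Eval_f\equiv 0$ in items (1)--(3): there the residue is a constant of $k$, such as $1/\bar c_0$.
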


\begin{proof}
Write $r = p / q$ in lowest terms, and denote $t = y^q / x^p$, so that $t$ is residually transcendental for $V_r$ and $V_r / M_r = k (\overline{t})$.

To find an element $f$ as in Item~\ref{LeftElement}, 
use the fact that $W = W_{\phi, r}$ is the pullback associated to some monic irreducible polynomial $\phi (\overline{t}) \in k [\overline{t}]$ with nonzero constant term as in Section~\ref{SectionKCR}.
Let $\Phi (u) \in D [u]$ be a monic lifting of this polynomial,
    $$\Phi (u) = u^n + a_1 u^{n-1} + \cdots + a_{n-1} u + a_n,$$  
whose nonzero coefficients $a_i$ are each units of $D$, with $a_n \ne 0$, and consider the element 
    $$\Phi (t) = t^n + a_1 t^{n-1} + \cdots + a_{n-1}t+ a_n.$$
Then for any $\lambda$,
    $$v_\lambda ( \Phi (t) ) = \min \{ n v_\lambda (t), 0 \}.$$
If $\lambda < r$, then $v_\lambda (t) < 0$ and thus $v_\lambda ( \Phi (t) ) = n v_\lambda (t) < 0$, and if $\lambda \ge r$, then $v_\lambda (t) \ge 0$ and thus $v_\lambda ( \Phi (t) ) = 0$.
Since $\Phi (t)$ is a lifting of $\phi (\overline{t})$, $W_{\phi, r}$ is the unique two-dimensional valuation overring of $D$ contained in $V_r$ which contains $\Phi (t)$ as a non-unit.
Moreover, for $\lambda > r$, since $v_\lambda (t) > 0$, then the image of $\Phi (t)$ in the residue field of $V_\lambda$ equals the image of $a_n$, which is a unit in $D$, so $\Phi (t)$ is residually algebraic.
Then the element $f = \frac{1}{\Phi (t)}$ satisfies the conditions of Item~\ref{LeftElement}.

An element $f$ satisfying the conditions in Item~\ref{RightElement} is constructed in exactly the same way, except the residue field of $V_r$ is viewed as $k (\overline{t^{-1}})$.
An element of the form $\Psi (t^{-1})$ is constructed, and $f = \frac{1}{\Psi (t^{-1})}$.

To construct an element $f$  satisfying the conditions in Item~\ref{BothElement}, construct $f_1$ and $f_2$ as in Items~\ref{LeftElement} and \ref{RightElement}, and set $f = f_1 f_2$.

Finally, to construct an element $f$ satisfying the conditions in Item~\ref{SideElement}, view the residue field of $V_r$ again as $k (\overline{t})$, and find monic polynomials $\phi (\overline{t}),$ $\phi' (\overline{t}) \in k [\overline{t}]$ such that $W = W_{\phi, r}$ and $W' = W_{\phi', r}$, respectively.
Let $\Phi (u),$ $\Phi' (u) \in D [u]$ be monic liftings whose nonzero coefficients are units of $D$.
As with Item~\ref{LeftElement}, $v_\lambda (\Phi (t)) = 0$ for $\lambda \ge r$ and $v_\lambda (\Phi (t)) = (\deg \Phi) v_\lambda (t)$ for $\lambda < r$, and similarly for $\Phi'$.
Thus, for all $\lambda$,
    $$v_\lambda ( (\Phi (t))^{\deg \Phi'}) = v_\lambda ( (\Phi' (t))^{\deg \Phi}).$$
It follows that the element
$$f = \frac{( \Phi' (t) )^{\deg \Phi}}{(\Phi (t))^{\deg \Phi'}}$$
satisfies the condition that $v_\lambda (f) = 0$ for all $\lambda$.
Since the elements $\Phi (t), \Phi' (t)$ are residually algebraic for $\lambda > r$, so is $f$.
To see that $f$ is residually algebraic for $\lambda < r$, divide the numerator and denominator of $f$ by the common leading term $t^{(\deg \Phi)(\deg \Phi')}$.
The resulting numerator and denominator each have residue $1$ in the residue field of $V_\lambda$, so the residue of $f$ is $1$.

Since the elements constructed in this lemma have non-negative $v_\lambda$ value for the interval $I$, they are almost integral over $T_I$.
\end{proof}

\section{The complete integral closure of an interval ring is a Pr\"ufer domain}

In this section, we prove that the complete integral closure ${T_I}^*$ of the interval ring $T_I$ is a Pr\"ufer domain.

Let $X$ be the collection of all valuation overrings of $T_I$.
To show that ${T_I}^*$ is Pr\"ufer, we claim that for any valuation ring $W \in X$, there exists a collection $U \subseteq K$ of almost integral elements over $T_I$ whose elements are units in $W$ and such that $W$ is the integral closure of $U^{-1} T_I [U]$ in $K$.
(This claim is trivial for $K \in X$ itself.)

This claim implies that ${T_I}^*$ is Pr\"ufer.
To see this, let $\mathfrak{p}$ be a prime ideal of ${T_I}^*$.
Then there exists a valuation overring $W$ of ${T_I}^*$, with maximal ideal $\mathfrak{m}_W$, such that $\mathfrak{m}_W \cap {T_I}^* = \mathfrak{p}$.
By the claim, there exists a set of almost integral elements $U \subseteq {T_I}^*$ such that $W$ is the integral closure of $R=U^{-1} T_I[U]$ in $K$.
Since the elements of $U$ are units in $W$, we have $R \subseteq ({T_I}^*)_{\mathfrak{p}} \subseteq W$.
The middle ring is integrally closed, so $W = ({T_I}^*)_{\mathfrak{p}}$.
We conclude that every localization of ${T_I}^*$ at a prime ideal is a valuation ring, proving that ${T_I}^*$ is Pr\"ufer.

To prove the claim, for each valuation ring $W$ of each class of valuation rings in $X$, we exhibit an explicit set $U$ of almost integral elements that are units in that valuation ring and, for every other valuation ring (except for its valuation overring in the case of a rank two valuation ring), contains an element that is a non-unit in that valuation ring.
That is, for every other valuation ring $W'$, an element of $U$ that is a unit in $W$ but not a unit in $W'$ ``eliminates'' the valuation ring $W'$.
If $U$ is a set of elements that ``eliminates'' all other valuation rings, then $W$ is the unique minimal valuation ring containing $U^{-1} T_I [U]$, so $W$ is the integral closure of $U^{-1} T_I [U]$ in $K$.

We consider each of the four classes of valuation rings, as in Section~\ref{ValuationOverringsSection}.

The first class of valuation rings are the rational monomial valuation rings $V_r$.
Take one element of the form of Lemma~\ref{ElementLemma}.\ref{BothElement} to eliminate ${W_r}^+$, ${W_r}^-$, and all valuations that are not subrings of $V_r$.
Then eliminate the rank $2$ valuation overrings with elements of the form of Lemma~\ref{ElementLemma}.\ref{SideElement}.

The second class of valuation rings are the irrational monomial valuation rings $V_r$.
Take sequences of rational numbers $\{ a_n \}_{n \ge 0}$ and $\{ b_n \}_{n \ge 0}$ that converge to $r$ from the left and right, respectively.
To eliminate the left side of the interval, for each $a_n$, take one element of the form of Lemma~\ref{ElementLemma}.\ref{LeftElement}.
Similarly, to eliminate the right side of the interval, for each $b_n$, take one element of the form of Lemma~\ref{ElementLemma}.\ref{RightElement}.

The third class of valuation rings are the ``internal'' rank two valuation rings $W$ associated to some $V_r$, with $r$ rational.
Suppose that $W = W_r^+$ and $r > a$ as in Lemma~\ref{ElementLemma}; the case $W = W_r^-$ and $r < b$ is similar.
The endpoint cases $W={W_a}^+$ and $W={W_b}^-$ are handled similarly, omitting the vacuous side of the interval.
To eliminate the left side of the interval, take any element of the form of Lemma~\ref{ElementLemma}.\ref{LeftElement}.
As in the previous case, take a sequence of rational numbers $\{ b_n \}_{n \ge 0}$ that converges to $r$ from the right.
To eliminate the right side of the interval, for each $b_n$, take one element of the form of Lemma~\ref{ElementLemma}.\ref{RightElement}.
To eliminate the remaining rank $2$ valuation overrings of $V_r$, take one element of the form of Lemma~\ref{ElementLemma}.\ref{SideElement} for each.

The fourth and final class of valuation rings are the ``external'' rank two valuation rings $W$ associated to some $V_r$, with $r$ rational.
Eliminate both sides of the interval with one element of the form of Lemma~\ref{ElementLemma}.\ref{BothElement}, and eliminate all other rank $2$ valuations with elements as in Lemma~\ref{ElementLemma}.\ref{SideElement} with one element for each.

\section{Interval rings are vacant}

Vacant domains, which were introduced in \cite[Definition 1.1]{MR2747239} and \cite[Corollary~4.10]{MR3105748}, are integrally closed domains with a unique representation as an intersection of valuation rings up to topological closure.
Aside from the class of Pr\"ufer domains \cite[Remark 2.2]{MR2747239}, few examples of vacant domains are known, and those examples that are local occur as certain pseudo-valuation domains (\cite[Section 32, Example 12]{MR1204267} and \cite[Section 4]{MR2747239}).
Interval rings provide a new and richer source of local examples.

Formally, for an integrally closed domain $T$, a collection $X$ of valuation overrings is a \textit{defining family} for $T$ if $T = \bigcap_{V \in X} V$.
The ring $T$ is a \textit{vacant domain} if $T$ has a unique defining family up to inverse closure \cite[Corollary 4.10]{MR3105748}.
That is, for any defining family $X$ of $T$, every valuation overring of $T$ is either in $X$, is a limit point of $X$ in the patch topology, or is an overring of a valuation ring in one of these two classes.

In particular, if $T$ has a defining family that is contained in every other defining family and is dense in the inverse topology, then $T$ is vacant.
We show that interval rings possess this property by giving the explicit minimal defining family.

Consider the interval $I = [a, b]$ with $a < b$, and let $T_I$ be the corresponding interval ring.
We claim that the set $X$ of ``external'' rank $2$ valuation overrings of $T_I$ is the minimal defining family.

Each rational valuation ring $V_r$ is a patch limit point of any infinite set of rank $2$ valuation rings it contains.

Each irrational valuation ring is a limit point of the rational valuation rings. 
As in Section~\ref{SectionRealTopology}, if $\{ a_n \}$ is a sequence of rational numbers converging to the irrational number $\lambda$, then $\{ V_{a_n} \}$ converges to $V_\lambda$ in the patch topology.

The ``internal'' rank $2$ valuation rings are limit points of the rational valuation rings, as in Section~\ref{SectionInternal}.

Let $W \in X$ be an ``external'' rank $2$ valuation ring.
By \ref{ElementLemma}.\ref{BothElement}, there exists an element $f \in K$ such that $f \notin W$, but $f \in V$ for all $V \in \Val (T_I) \setminus \{ W \}$.
Therefore the set $\Val (T_I) \setminus \{ W \}$ is closed in the patch topology, so $W$ is not a limit point in any defining family.

Thus $X$ is a defining family, and every defining family must include every valuation ring of $X$, so we conclude $X$ is the unique minimal defining family and $T_I$ is vacant.

\section{Other Real Intervals}

So far, we have considered closed intervals $I = [a, b]$ with $0 < a < b < \infty$.
In this section, we will consider half-open and open intervals, and real intervals whose left endpoint is $0$ or whose right endpoint is $\infty$.

Let $I = (a, b]$ with $0 < a < b < \infty$ and let $\overline{I} = [a, b]$, so that $T_{\overline{I}} \subseteq T_I$.
We have previously shown that $T_{\overline{I}}$ is the intersection of its Krull-constructed rings.
If $a$ were irrational, the intervals $(a, b]$ and $[a, b]$ induce the same set of Krull-constructed rings, implying that $T_I = T_{\overline{I}}$.
Thus we need only consider the case where $a$ is rational.

We show the ring $T_I$ shares many properties with interval rings defined by closed intervals:
\begin{enumerate}
    \item $T_I$ is local with maximal ideal $\displaystyle M_I = \bigcap_{r \in I} M_r$.
    \item $T_I$ is the intersection of the Krull-constructed rings associated to the rational numbers in $I$.
    \item $T_I$ is an almost integral extension of $T_{\overline{I}}$, and thus is vacant and has the same complete integral closure.
    \item $T_I$ is two-dimensional, its unique height one prime is the center of $V_a$, and $V_a$ is the localization of $T_I$ at its center.
\end{enumerate}

Write $I = (a, b]$ as an ascending union of closed intervals $I_n = [a_n, b]$, so that $T_I$ is a descending intersection of local rings $T_{I_n}$ with a descending sequence of maximal ideals $M_{I_n}$.
That is,
\[ I = \bigcup_{n \ge 0} I_n, \quad T_I = \bigcap_{n \ge 0} T_{I_n}, \quad M_{I} = \bigcap_{n \ge 0} M_{I_n},\]
so that $T_I$ is local with maximal ideal $M_I$.

We claim that the valuation rings containing $T_I$ are precisely the monomial valuation rings associated to the interval $I$, their rank $2$ valuation subrings, and in addition, the valuation rings $V_a$ and ${W_a}^+$.
To see this, each of the valuation rings associated to the interval $I$ is a valuation overring of some $T_{I_n}$, and so is a valuation overring of $T_I$.
Since $T_{\overline{I}} \subseteq T_I$, the only valuation rings remaining to consider are $V_a$ and its rank $2$ valuation subrings.

Let $f \in T_I$ and consider its evaluation function $\Eval_f : [a, b] \rightarrow \mathbb{R}$.
Since $f \in T_I$, $\Eval_f (\lambda) \ge 0$ on $(a, b]$, so $f \in {W_a}^+$ as in the definition of ${W_a}^+$.
Thus $T_I \subseteq {W_a}^+$, and since ${W_a}^+ \subseteq V_a$, also $T_I \subseteq V_a$.
Furthermore, if $f \in M_{I}$, then $\Eval_f (\lambda) > 0$ on $(a, b]$, hence $f \in {M_a}^+$.
Thus ${W_a}^+$ dominates $T_I$.

Finally, we show that none of the other rank $2$ valuation subrings of $V_a$ are overrings of $T_I$.
If $W$ is any such valuation subring other than ${W_a}^{-}$, then any elements produced by Lemma~\ref{ElementLemma} with $r = a$ and the rank $2$ valuation $W$ are in $T_I$ and not in $W$, showing that $W$ is not an overring of $T_I$.
For ${W_a}^{-}$, the reciprocal of an element produced by Lemma~\ref{ElementLemma}.\ref{LeftElement} (with $r = a$ and $W$ any rank $2$ valuation subring of $V_a$) is in $T_I$ but not in ${W_a}^{-}$, showing that ${W_a}^{-}$ is not an overring of $T_I$.

For a real interval $I = (a, b)$, with $0 < a < b < \infty$ and $a, b$ rational, the same arguments and properties hold, except:
\begin{enumerate}
    \item[(4')] $T_I$ is two-dimensional, and it has precisely two height one primes, the centers of $V_a$ and $V_b$, and $V_a$ and $V_b$ are the localizations of $T_I$ at their centers.
\end{enumerate}

To consider intervals whose endpoints are zero or infinity, denote $V_x = D_{(x D)}$ and $V_y = D_{(y D)}$, the $x$-adic and $y$-adic valuation rings, and let $W_x$ and $W_y$ denote their unique rank $2$ valuation subrings dominating $D$.
For an interval $[0, b]$ (with $b < \infty$), or an interval $[a, \infty]$ (with $a > 0$), or the interval $[0, \infty]$, we extend the definition of interval rings using $W_x$ or $W_y$ in place of the Krull-constructed ring for $0$ or $\infty$, respectively.

Consider the interval $I = (0, b]$, with $b < \infty$, and $\overline{I} = [0, b]$.
As before, write the interval $I$ as an ascending union of closed intervals $[a_n, b]$, so that $T_I$ is the descending intersection of local rings $T_{I_n}$.
By the same argument using evaluation functions (as for the interval $(a, b]$), $W_x$ dominates $T_I$.
Since $T_{\overline{I}}$ is defined to be the intersection of $T_{I}$ with $W_x$, it follows that $T_{\overline{I}} = T_I$.

Arguments similar to those in Section~\ref{ValuationOverringsSection} show that the only valuation overrings of $T_{\overline{I}}$ are those used in its construction.
For rank $1$ valuations $w$ such that $w (y) \in I$ or $w (y) > b$, the exact same argument applies.
The only rank $1$ valuation $w$ with $w (y) = 0$ is the valuation for $V_x$, and its only rank $2$ valuation, $W_x$, is used in the construction of $T_{\overline{I}}$.

We conclude that $T_I = T_{\overline{I}}$ satisfies the same properties (1)-(4).

For an interval $[a, \infty]$, we may swap the variables $x$ and $y$ and instead consider the equivalent ring for the interval $[0, \frac{1}{a}]$, so it also satisfies the same properties.

In summary, using an open rational endpoint, or an endpoint of $0$ or $\infty$ (whether open or closed), keeps the interval ring local, but adds a height one non-maximal prime ideal.
This non-maximal prime ideal is the center of the monomial valuation ring corresponding to the endpoint.
Using two such endpoints adds two such height one prime ideals.

\section{Connections with more ordinary classes of rings}

We know that an interval ring (built using a nontrivial closed interval) is a vacant one-dimensional integrally closed local ring which is neither a valuation ring nor completely integrally closed.
It follows easily that an interval ring is not Noetherian, not Pr\"ufer, and not Krull.
It is also not coherent, or even a finite conductor domain, since a one-dimensional local integrally closed finite conductor domain is a valuation domain \cite[Theorem 1]{MR304371}.

Interval rings satisfy ACCP, but are not Mori domains.
That is, every ascending chain of principal ideals eventually stabilizes, but there exists a non-stabilizing chain of divisorial ideals.
Indeed, since $T_I$ is a local ring that's dominated by a DVR, it satisfies ACCP.
To see this, let $v$ be an integer-valued valuation of a DVR dominating $T_I$.
If $a T_I \subsetneq b T_I$, then $\frac{a}{b} \in M_{T_I}$, so $v (a) > v (b)$.
Thus no strictly ascending chain of principal ideals starting from $a T_I$ can exceed $v (a)$ in length.

However, $T_I$ is not a Mori domain.
To see this, we show that ${T_I}^*$ does not satisfy ACCP.
We use the fact that, as in Section~\ref{SectionEvaluation}, for any rational number $p / q \in [a,b]$, written in lowest terms, there exists an element $f \in {T_I}^*$ such that $\max \Eval_f ([a, b]) = 1 / q.$

Take a sequence of rational numbers $r_i = p_i / q_i \in I$ such that $$\sum_{i=1}^{\infty} 1 / q_i < 1,$$ and take the corresponding sequence of elements $f_i \in {T_I}^*$ with $$\max \Eval_{f_i} ([a, b]) = 1 / q_i.$$
Then,
    $$\frac{x}{f_1} {T_I}^* \subsetneq \frac{x}{f_1 f_2} {T_I}^* \subsetneq \cdots \subsetneq \frac{x}{f_1 \cdots f_n} {T_I}^* \subsetneq \cdots$$
is a strictly ascending chain of principal ideals in ${T_I}^*$.
To prove that this is a chain of divisorial ideals in $T_I$, it suffices to show that ${T_I}^*$ is a divisorial fractional ideal of $T_I$, and to prove this, it is enough to show that ${T_I}^* = (T_I :_{K} M_I)$.
Since $M_I$ is the intersection of the maximal ideals of the rings $V_r$, for $r \in I$, and ${T_I}^*$ is the intersection of these rings, we have ${T_I}^* \subseteq (M_I:_K M_I) \subseteq (T_I :_{K} M_I)$. Thus $M_I$ is not invertible since $T_I \ne {T_I}^* \subseteq (M_I:_K M_I)$.
Since $M_I$ is a non-invertible maximal ideal of $T_I$, it follows that $M_I(T_I :_{K} M_I) = M_I$, so that $(T_I :_{K} M_I) \subseteq (M_I:_K M_I) \subseteq (T_I :_{K} M_I)$, proving that $(T_I :_K M_I) = (M_I :_K M_I)$.
Since $(M_I:_K M_I) \subseteq {T_I}^*$ from the fact that ${T_I}^*$ is completely integrally closed, and since ${T_I}^* \subseteq (M_I :_K M_I)$ was established above, we conclude that ${T_I}^* = (M_I :_K M_I)$, and hence ${T_I}^* = (T_I :_K M_I)$.
Thus ${T_I}^*$ is a divisorial fractional ideal of $T_I$, so $T_I$ is not a Mori domain.

\section{Acknowledgments}

We thank the referees for valuable suggestions that improved the paper, including the argument in Section~\ref{ValuationOverringsSection} that eliminates the requirement that the residue field have more than two elements.

\medskip 

\printbibliography

@article {MR3105748,
    AUTHOR = {Finocchiaro, Carmelo A. and Fontana, Marco and Loper, K. Alan},
     TITLE = {The constructible topology on spaces of valuation domains},
   JOURNAL = {Trans. Amer. Math. Soc.},
  FJOURNAL = {Transactions of the American Mathematical Society},
    VOLUME = {365},
      YEAR = {2013},
    NUMBER = {12},
     PAGES = {6199--6216},
      ISSN = {0002-9947,1088-6850},
   MRCLASS = {13F30 (13A18)},
  MRNUMBER = {3105748},
MRREVIEWER = {Ry\={u}ki\ Matsuda},
       DOI = {10.1090/S0002-9947-2013-05741-8},
       URL = {https://doi.org/10.1090/S0002-9947-2013-05741-8},
}

@article {MR2747239,
    AUTHOR = {Fabbri, Alice},
     TITLE = {Integral domains having a unique {K}ronecker function ring},
   JOURNAL = {J. Pure Appl. Algebra},
  FJOURNAL = {Journal of Pure and Applied Algebra},
    VOLUME = {215},
      YEAR = {2011},
    NUMBER = {5},
     PAGES = {1069--1084},
      ISSN = {0022-4049,1873-1376},
   MRCLASS = {13A15 (13F05)},
  MRNUMBER = {2747239},
MRREVIEWER = {David\ E.\ Rush},
       DOI = {10.1016/j.jpaa.2010.07.012},
       URL = {https://doi.org/10.1016/j.jpaa.2010.07.012},
}

@article {MR2289617,
    AUTHOR = {Granja, A.},
     TITLE = {The valuative tree of a two-dimensional regular local ring},
   JOURNAL = {Math. Res. Lett.},
  FJOURNAL = {Mathematical Research Letters},
    VOLUME = {14},
      YEAR = {2007},
    NUMBER = {1},
     PAGES = {19--34},
      ISSN = {1073-2780},
   MRCLASS = {13A18 (13H05)},
  MRNUMBER = {2289617},
MRREVIEWER = {Paolo\ Zanardo},
       DOI = {10.4310/MRL.2007.v14.n1.a2},
       URL = {https://doi.org/10.4310/MRL.2007.v14.n1.a2},
}

@article {MR251026,
    AUTHOR = {Hochster, M.},
     TITLE = {Prime ideal structure in commutative rings},
   JOURNAL = {Trans. Amer. Math. Soc.},
  FJOURNAL = {Transactions of the American Mathematical Society},
    VOLUME = {142},
      YEAR = {1969},
     PAGES = {43--60},
      ISSN = {0002-9947,1088-6850},
   MRCLASS = {13.20 (14.00)},
  MRNUMBER = {251026},
MRREVIEWER = {S.\ S.\ Page},
       DOI = {10.2307/1995344},
       URL = {https://doi.org/10.2307/1995344},
}

@incollection {MR4394424,
    AUTHOR = {Olberding, Bruce},
     TITLE = {The {Z}ariski-{R}iemann space of valuation rings},
 BOOKTITLE = {Commutative algebra},
     PAGES = {639--667},
 PUBLISHER = {Springer, Cham},
      YEAR = {2021},
      ISBN = {978-3-030-89693-5; 978-3-030-89694-2},
   MRCLASS = {14A05 (13A18)},
  MRNUMBER = {4394424},
       DOI = {10.1007/978-3-030-89694-2\_21},
       URL = {https://doi.org/10.1007/978-3-030-89694-2_21},
}

@article {MR54571,
    AUTHOR = {Seidenberg, A.},
     TITLE = {A note on the dimension theory of rings},
   JOURNAL = {Pacific J. Math.},
  FJOURNAL = {Pacific Journal of Mathematics},
    VOLUME = {3},
      YEAR = {1953},
     PAGES = {505--512},
      ISSN = {0030-8730},
   MRCLASS = {09.1X},
  MRNUMBER = {54571},
MRREVIEWER = {I. S. Cohen},
       URL = {https://projecteuclid.org/euclid.pjm/1103051409},
}

@book {MR1204267,
    AUTHOR = {Gilmer, Robert},
     TITLE = {Multiplicative ideal theory},
    SERIES = {Queen's Papers in Pure and Applied Mathematics},
    VOLUME = {90},
      NOTE = {Corrected reprint of the 1972 edition},
 PUBLISHER = {Queen's University, Kingston, ON},
      YEAR = {1992},
     PAGES = {xii+609},
   MRCLASS = {13-02 (13A15)},
  MRNUMBER = {1204267},
}

@unpublished {2406.10966,
    AUTHOR      = {Heinzer, William and Loper, K. Alan and Olberding, Bruce and
                   Toeniskoetter, Matthew},
    TITLE       = {Connectedness and integrally closed local overrings of two-dimensional regular local rings},
    EPRINT      = {2406.10966},
    EPRINTTYPE  = {arxiv},
    EPRINTCLASS = {math.AC},
    NOTE        = {Submitted},
    YEAR        = {2024}
}

@article {MR4719886,
    AUTHOR = {Heinzer, William and Alan Loper, K. and Olberding, Bruce and
              Toeniskoetter, Matthew},
     TITLE = {A connectedness theorem for spaces of valuation rings},
   JOURNAL = {J. Algebra},
  FJOURNAL = {Journal of Algebra},
    VOLUME = {647},
      YEAR = {2024},
     PAGES = {844--857},
      ISSN = {0021-8693,1090-266X},
   MRCLASS = {13A18},
  MRNUMBER = {4719886},
       DOI = {10.1016/j.jalgebra.2024.02.032},
       URL = {https://doi.org/10.1016/j.jalgebra.2024.02.032},
}

@book {MR1011461,
    AUTHOR = {Matsumura, Hideyuki},
     TITLE = {Commutative ring theory},
    SERIES = {Cambridge Studies in Advanced Mathematics},
    VOLUME = {8},
   EDITION = {Second},
      NOTE = {Translated from the Japanese by M. Reid},
 PUBLISHER = {Cambridge University Press, Cambridge},
      YEAR = {1989},
     PAGES = {xiv+320},
      ISBN = {0-521-36764-6},
   MRCLASS = {13-01},
  MRNUMBER = {1011461},
}

@book {MR242802,
    AUTHOR = {Atiyah, M. F. and Macdonald, I. G.},
     TITLE = {Introduction to commutative algebra},
 PUBLISHER = {Addison-Wesley Publishing Co., Reading, Mass.-London-Don
              Mills, Ont.},
      YEAR = {1969},
     PAGES = {ix+128},
   MRCLASS = {13.00},
  MRNUMBER = {242802},
MRREVIEWER = {Johnny\ A.\ Johnson},
}

@article {MR220714,
    AUTHOR = {Gilmer, Jr., Robert W. and Heinzer, William J.},
     TITLE = {On the complete integral closure of an integral domain},
   JOURNAL = {J. Austral. Math. Soc.},
  FJOURNAL = {J. Austral. Math. Soc.},
    VOLUME = {6},
      YEAR = {1966},
     PAGES = {351--361},
   MRCLASS = {13.15},
  MRNUMBER = {220714},
MRREVIEWER = {E.\ H.\ Batho},
}

@article {MR304371,
    AUTHOR = {McAdam, Stephen},
     TITLE = {Two conductor theorems},
   JOURNAL = {J. Algebra},
  FJOURNAL = {Journal of Algebra},
    VOLUME = {23},
      YEAR = {1972},
     PAGES = {239--240},
      ISSN = {0021-8693},
   MRCLASS = {13G05},
  MRNUMBER = {304371},
MRREVIEWER = {J.\ T.\ Arnold},
       DOI = {10.1016/0021-8693(72)90128-7},
       URL = {https://doi.org/10.1016/0021-8693(72)90128-7},
}

@article {MR1545646,
    AUTHOR = {Krull, Wolfgang},
     TITLE = {Beitr\"age zur {A}rithmetik kommutativer
              {I}ntegrit\"atsbereiche},
   JOURNAL = {Math. Z.},
  FJOURNAL = {Mathematische Zeitschrift},
    VOLUME = {41},
      YEAR = {1936},
    NUMBER = {1},
     PAGES = {665--679},
      ISSN = {0025-5874,1432-1823},
   MRCLASS = {99-04},
  MRNUMBER = {1545646},
       DOI = {10.1007/BF01180447},
       URL = {https://doi.org/10.1007/BF01180447},
}
 
\end{document}